\documentclass[12pt,reqno]{amsart}

\usepackage{amssymb,latexsym,hyperref,tikz}
\usetikzlibrary{calc}

\oddsidemargin=0.4in
\evensidemargin=0.4in
\topmargin=-0.2in
\textwidth=15cm
\textheight=23cm 

\newcommand{\A}{\mathrm{A}}  
\newcommand{\Aut}{\mathrm{Aut}}
\newcommand{\bbF}{\mathbb{F}} 
\newcommand{\Cay}{\mathrm{Cay}}
\newcommand{\D}{\mathrm{D}}
\newcommand{\E}{\mathrm{E}}
\newcommand{\Nor}{\mathbf{N}}
\newcommand{\PSL}{\mathrm{PSL}}
\newcommand{\Sz}{\mathrm{Sz}}
\newcommand{\V}{\mathrm{V}}
\newcommand{\ZZ}{\mathrm{C}}

\newtheorem{theorem}{Theorem}[section]
\newtheorem{lemma}[theorem]{Lemma}
\newtheorem{proposition}[theorem]{Proposition}
\newtheorem{corollary}[theorem]{Corollary}

\theoremstyle{definition}

\begin{document}

\title[Oriented regular representations of finite simple groups]{Oriented regular representations of out-valency two for finite simple groups}

\author[Verret]{Gabriel Verret}
\address{(Verret) Department of Mathematics\\ University of Auckland\\ PB 92019\\ Auckland\\ New Zealand}
\email{g.verret@auckland.ac.nz}

\author[Xia]{Binzhou Xia}
\address{(Xia) School of Mathematics and Statistics\\University of Melbourne\\Parkville, VIC 3010\\Australia}
\email{binzhoux@unimelb.edu.au}

\keywords{finite simple group; DRR; ORR}
\subjclass[2010]{05C25, 05C20, 20B25}

\begin{abstract}
In this paper, we show that every finite simple group of order at least $5$ admits an oriented regular representation of out-valency $2$.
\end{abstract}

\maketitle

\section{Introduction}

All groups and digraphs in this paper are finite. A \emph{digraph} $\Gamma$ consists of a set of vertices $\V(\Gamma)$ and a set of arcs $\A(\Gamma)$, each arc being an ordered pair of vertices.  A digraph  is \emph{proper} if $(u,v)$ being an arc implies that $(v,u)$ is not an arc. The automorphisms of $\Gamma$ are the permutations of $\V(\Gamma)$ that preserve $\A(\Gamma)$. Under composition, they form the automorphism group $\Aut(\Gamma)$ of $\Gamma$.

Let $G$ be a group and $S\subseteq G$. The \emph{Cayley digraph} $\Cay(G,S)$ on  $G$ with connection set $S$ is the digraph with vertex set $G$ and $(u,v)$ being an arc whenever $vu^{-1}\in S$. Note that $\Cay(G,S)$ is a proper digraph if and only if $S\cap S^{-1}=\emptyset$. Note also that every vertex $u$ in $\Cay(G,S)$ is contained in exactly $|S|$ arcs of the form $(u,v)$. We thus say that $\Cay(G,S)$ has \emph{out-valency} $|S|$.

It is easy to see that $\Aut(\Cay(G,S))$ contains the right regular representation of $G$. If this containment is actually equality, then $\Cay(G,S)$ is called a \emph{digraphical regular representation} (or DRR) of $G$. A DRR that is a proper digraph is called an \emph{oriented regular representation} (or ORR). 

Babai proved that, apart from five small groups, all groups admit a DRR~\cite[Theorem 2.1]{Babai}. He also asked which groups admit ORRs~\cite[Problem 2.7]{Babai}. This was answered by Morris and Spiga~\cite{MorrisSpigaORR1,MorrisSpigaORR2,SpigaORR} who showed that apart from generalised dihedral groups and a small list of exceptions, all groups admit ORRs.

In view of the above, a natural problem is to find ``nice'' DRRs and ORRs, say of ``small'' out-valency. Clearly, only cyclic groups can have DRRs of out-valency $1$, so out-valency $2$ is the smallest interesting case. In this paper, we give the most satisfactory answer to this question in the case of simple groups.

\begin{theorem}\label{ThmMain}
Every finite simple group of order at least $5$ has a ORR of out-valency $2$. 
\end{theorem}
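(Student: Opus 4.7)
The approach is to invoke the classification of finite simple groups and construct, for each simple $G$ of order at least $5$, an explicit two-element connection set $S=\{a,b\}$ realising the ORR. The target properties of $S$ are:
\begin{enumerate}
\item[(i)] $\langle a,b\rangle=G$;
\item[(ii)] $a$ and $b$ are non-involutions with $b\neq a^{-1}$, giving $|S|=2$ and $S\cap S^{-1}=\emptyset$;
\item[(iii)] $|a|\neq|b|$, so no element of $\Aut(G)$ can swap $a$ and $b$;
\item[(iv)] $C_{\Aut(G)}(a)\cap C_{\Aut(G)}(b)=1$.
\end{enumerate}
Together (iii)--(iv) force $\Aut(G,S):=\{\sigma\in\Aut(G):S^\sigma=S\}=1$.

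Writing $A=\Aut(\Cay(G,S))$ and $A_1$ for the stabiliser of the identity, the theorem reduces to $A_1=1$. The key observation is that if $G\trianglelefteq A$, then conjugation embeds $A_1$ into $\Aut(G)$; since $A_1$ also preserves the out-neighbourhood $S$ of the identity setwise, we get $A_1\le\Aut(G,S)=1$. So the bulk of the work is to show that $G$ is normal in $A$. Out-valency $2$ keeps the total valency of $\Cay(G,S)$ at most $4$, and a nonabelian simple group embedded regularly in the automorphism group of a digraph of such small valency is tightly constrained. I would lean on the existing classification of small-valency vertex-transitive digraphs on simple groups (in the spirit of work by Fang--Li--Praeger--Xia and Spiga) to reduce the normality question to eliminating a short explicit list of candidate overgroups, which is then done using the properties of the chosen $S$.

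The construction of $(a,b)$ would proceed by family. Cyclic groups of prime order $p\ge5$ are handled directly (e.g.\ $S=\{1,2\}$ inside $\mathbb{Z}/p\mathbb{Z}$, whose Cayley digraphs are CI). For $\A_n$ ($n\ge5$) one takes $a$ a long cycle and $b$ a $3$-cycle, with cycle types tuned so that both are non-involutions of distinct orders generating $\A_n$; condition (iv) is then a cycle-structure check in the symmetric group. For the groups of Lie type, one uses a generating pair of a regular semisimple (Singer-type) element and a suitable unipotent or Weyl element, available from Guralnick--Kantor type generation results; (iii) is arranged by choosing a prime divisor of $|a|$ not dividing $|b|$, and (iv) follows since the centraliser in $\Aut(G)$ of a regular semisimple element is a cyclic torus. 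The sporadic groups and a finite list of small exceptions (notably some $\PSL_2(q)$ and the Suzuki groups $\Sz(q)$) would be settled by direct computation in a computer algebra system.

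The main obstacle will be proving normality of $G$ in $A$ uniformly. Even with $G$ simple, one must rule out the possibility that $A$ contains a second regular subgroup not conjugate to $G$, or acts quasiprimitively with $G$ as a proper composition factor; such exotic overgroups do arise at valency at most $4$ for small groups and must be sidestepped by the choice of $S$. Eliminating these scenarios across the Lie type families -- and especially for $\Sz(q)$ and low-dimensional $\PSL$, where small-valency Cayley digraphs are known to admit unusual larger automorphism groups -- is where the technical core of the proof will lie.
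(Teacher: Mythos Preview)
Your strategy is coherent but leaves the crucial step---normality of $G$ in $A$---as an acknowledged obstacle rather than a resolved one. Without that, conditions (iii)--(iv) on $\Aut(G,S)$ do not by themselves control $A_1$, so the proposal is a plan, not a proof.

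More to the point, the paper sidesteps the normality question entirely with a direct and far more elementary argument. The idea is to choose $x$ of \emph{small} order (namely $3$, or $4$ for the Suzuki groups) and $y$ of strictly larger order, and then observe that among all directed walks of length $|x|$ starting at the identity, the only one that closes up is $(1,x,x^2,\ldots,x^{|x|}=1)$. A handful of elementary inequalities on $|y|$ (and on $|xy|$ in the order-$4$ case) rule out the other candidate closed walks. This unique short directed cycle forces $A_1$ to fix $x$, hence also the only other out-neighbour $y$, and connectedness then gives $A_1=1$ immediately. No appeal to normality, no structural theory of vertex-transitive digraphs, and no computer verification is needed; the only generation input is Guralnick--Kantor, plus a short counting argument inside $\Sz(q)$ to locate a suitable pair with $|x|=4$, $|y|\ge 5$, $|xy|\ge 3$.

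Your route, if it could be completed, would be considerably heavier: it trades a two-paragraph local combinatorial argument for a case analysis of possible overgroups of a regular simple subgroup at total valency $\le 4$, and still defers sporadic and small Lie-type cases to machine checks. The paper's decisive simplification---taking one generator of order exactly $3$ (or $4$) so that a unique shortest directed cycle pins down the local action---is what you are missing.
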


A corollary of Theorem~\ref{ThmMain} is that every nonabelian simple group has a DRR of out-valency $2$. However, the latter conclusion is an immediate consequence of the fact that every nonabelian simple group is generated by an involution and a non-involution (even by an involution and an element of odd prime order, see~\cite[Theorem~1]{King2017}). Indeed, consider a Cayley digraph on a nonabelian simple group with connection set consisting of such a generating pair. This digraph has out-valency $2$, but one out-neighbour of every vertex is also an in-neighbour while the other out-neighbour is not. This implies that fixing a vertex must also fix its out-neighbours and, by connectedness, the whole digraph, and the digraph is a DRR.

Note that Cayley digraphs of out-valency two of simple groups were previously studied in~\cite{FangLuWangXuLowValency}. Another interesting variant of this question would be to consider undirected graphs. In this case, the smallest interesting valency is $3$. The question of which simple groups admit graphical regular representations of valency $3$ has received some attention but is still open~\cite{SpigaCubic,XiaCubic1,XiaCubic2,XiaFangCubic}.

\section{Preliminaries}
\subsection{Generation of finite simple groups}

In this section we present some generation properties of finite simple groups, which will be needed in the proof of Theorem~\ref{ThmMain}. The following result is due to Guralnick and Kantor~\cite[Corollary]{GK2000}.

\begin{theorem}[Guralnick-Kantor]\label{ThmGK}
Every nontrivial element of a finite simple group belongs to a pair of elements generating the group.
\end{theorem}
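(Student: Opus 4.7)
The plan is to prove this via a probabilistic counting over maximal subgroups. Fix a nontrivial $x\in G$ and let $\mathcal{M}_x$ denote the set of maximal subgroups of $G$ containing $x$. A pair $(x,y)$ fails to generate $G$ precisely when $\langle x,y\rangle$ is contained in some maximal subgroup, which must then lie in $\mathcal{M}_x$. Hence the number of $y$ with $\langle x,y\rangle\neq G$ is at most $\sum_{M\in\mathcal{M}_x}|M|$, so it suffices to prove
\[
\sum_{M\in\mathcal{M}_x}\frac{1}{[G:M]}<1,
\]
which would yield the stronger conclusion that a positive proportion of $y\in G$ generate $G$ together with $x$.

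Such a uniform bound does not appear to be available without the classification of finite simple groups, so I would proceed by case analysis. For the sporadic groups, the conjugacy classes and maximal subgroups are tabulated in the ATLAS and the inequality reduces to a finite computation. For alternating groups $A_n$, the O'Nan--Scott theorem partitions $\mathcal{M}_x$ into intransitive, imprimitive, and primitive families: the first two are handled by counting invariant subsets and block systems compatible with the cycle structure of $x$, while the primitive family is controlled by the classical $|M|\leq 4^n$ estimate, making its contribution exponentially small in $n$.

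For the groups of Lie type, which form the bulk of the argument, one invokes Aschbacher's structure theorem for the maximal subgroups of classical groups, together with the Liebeck--Seitz analysis for exceptional groups. Within each Aschbacher class, the number of conjugates of $M$ containing $x$ is computed via the double-counting identity $|\mathcal{M}_x\cap M^G|=|x^G\cap M|\cdot|C_G(x)|/|M|$, and $|x^G\cap M|$ is bounded using information on conjugacy classes in $M$. The main obstacle is that for small-rank groups and for unipotent elements of small order the naive estimates just barely fail, since a short-root unipotent can sit inside many parabolic and reductive maximal subgroups at once; these cases are disposed of either by sharpening the bound using the precise centraliser structure of semisimple and unipotent elements, or by direct computation in the offending low-rank group.
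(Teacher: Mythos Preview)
The paper does not prove this statement at all: Theorem~\ref{ThmGK} is quoted as a result of Guralnick and Kantor, with a citation to~\cite{GK2000}, and is used as a black box in the rest of the argument. There is nothing in the paper to compare your proposal against.

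That said, your outline is broadly the strategy Guralnick and Kantor actually follow in~\cite{GK2000}: reduce to showing $\sum_{M\in\mathcal{M}_x}[G:M]^{-1}<1$, use the double-counting identity to rewrite the summands as fixed-point ratios $|x^G\cap M|/|x^G|$, and then run a CFSG-dependent case analysis (sporadics by inspection, $A_n$ via O'Nan--Scott, Lie type via Aschbacher and Liebeck--Seitz) with ad hoc sharpening for small-rank and small-order cases. As a high-level summary this is accurate, but it is a sketch of a long paper rather than a proof; several of the ``barely failing'' cases you allude to require substantial work, and the sentence about the primitive maximal subgroups of $A_n$ is too cavalier (the bound $|M|\le 4^n$ is not quite what one wants, and one still has to control the \emph{number} of such $M$ containing $x$). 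For the purposes of the present paper, none of this matters: simply cite the result as the authors do.
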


Note that Theorem~\ref{ThmGK} depends on the classification of finite simple groups.

\begin{corollary}\label{CorGK}
Let $G$ be a finite nonabelian simple group  with an element $x$ of order $3$. Then there exists $y\in G$ such that $|y|\geq4$ and $G=\langle x,y\rangle$.
\end{corollary}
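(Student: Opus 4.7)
The plan is to combine Theorem~\ref{ThmGK}, in the full conjugacy-class form from \cite{GK2000}, with the observation that $G$ must contain elements of order at least $5$.

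Since $G$ is a finite nonabelian simple group, Burnside's $p^{a}q^{b}$-theorem implies that $|G|$ has at least three distinct prime divisors. In particular, some prime $p\geq 5$ divides $|G|$, so $G$ contains an element $t$ of order $p$. Let $C$ denote the conjugacy class of $t$ in $G$; every element of $C$ has order $p\geq 5\geq 4$.

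I would then invoke the conjugacy-class version of the Guralnick--Kantor corollary, namely that for every nontrivial element $s$ of a finite simple group and every nontrivial conjugacy class $D$, some $g\in D$ satisfies $\langle s,g\rangle=G$. Applying this to $s=x$ and $D=C$ produces an element $y\in C$ with $\langle x,y\rangle=G$, and since $|y|=p\geq 5$, this $y$ is the required element.

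The main obstacle is that Theorem~\ref{ThmGK} as stated only gives the existence of a generating pair without any control over the order of the second generator, whereas Corollary~\ref{CorGK} prescribes this order. A naive attempt---applying Theorem~\ref{ThmGK} to $x$ to obtain $z$ with $\langle x,z\rangle=G$ and then trying the Nielsen-type adjustments $x^{a}z^{\pm 1}x^{b}$, each of which still generates $G$ together with $x$---does not in general produce an element of order at least $4$, so the clean route is to appeal directly to the stronger conjugacy-class form of Guralnick--Kantor.
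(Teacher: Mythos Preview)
The conjugacy-class statement you invoke is not the Guralnick--Kantor corollary. What they prove is that there exists a \emph{single} conjugacy class $C$, depending only on $G$, such that every nontrivial $s\in G$ generates $G$ with some member of $C$; the quantifiers are $\exists C\,\forall s$, not $\forall s\,\forall C$ as you state. Your version is false already in the alternating group $A_5$, where any two involutions generate a dihedral group. It also fails in the specific setting you need: in $A_{10}$ with $x=(1,2,3)$, take $C$ to be the class of $5$-cycles (or of $7$-cycles). Any $g\in C$ moves at most seven points, so the supports of $x$ and $g$ either omit a point of $\{1,\dots,10\}$ or are disjoint; in every case $\langle x,g\rangle$ is intransitive and hence proper. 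Thus you cannot simply pick an arbitrary element $t$ of prime order at least $5$ and expect its class to work, and Guralnick--Kantor gives no information about the element orders occurring in the class their theorem provides.

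Ironically, the Nielsen adjustment you set aside is exactly how the paper proceeds. From $\langle x,z\rangle=G$ one also has $\langle x,xz\rangle=G$; if both $z$ and $xz$ have order at most $3$, then $G$ is a quotient of the ordinary $(3,m,n)$ triangle group $\langle x,z\mid x^3, z^m, (xz)^n\rangle$ with $m,n\leq 3$, which is solvable, contradicting simplicity. Hence one of $z$, $xz$ has order at least $4$ and serves as $y$.
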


\begin{proof}
By Theorem~\ref{ThmGK} there exists $z\in G$ such that $G=\langle x,z\rangle$. Note that $\langle x,z\rangle=\langle x,xz\rangle$ hence, if either $z$ or $xz$ has order at least $4$, then the conclusion holds (by taking $y=z$ or $y=xz$). We may thus assume that $z$ and $xz$ both have order at most $3$. This implies that $G$ is a quotient of the finitely presented group
\[
\langle x,z\mid x^3,z^m,(xz)^n\rangle
\]
with $m,n\leq 3$. This is the ``ordinary'' $(3,m,n)$ triangle group which is well known to be solvable when $m,n\leq 3$ (see for example~\cite{ConderTriangle}) and therefore so is $G$, which is a contradiction.
\end{proof}

The only nonabelian simple groups with no elements of order $3$ are the Suzuki groups (see~\cite[Page~8, Table~I]{GLS1994}), which we now consider. For a positive integer $m$ and prime number $p$, a prime number $r$ is called a \emph{primitive prime divisor} of $p^m-1$ if $r$ divides $p^m-1$ but does not divide $p^k-1$ for any positive integer $k<m$. By Zsigmondy's theorem~\cite{Zsigmondy1892}, $p^m-1$ has a primitive prime divisor whenever $m\geq3$ and $(p,m)\neq(2,6)$.

\begin{proposition}\label{PropSuzuki}
Let $G=\Sz(q)$ with $q=2^{2n+1}\geq8$ and let $r$ be a primitive prime divisor of $q^4-1$. Then $r\geq 5$, $G$ has an element $y$ of order $r$ and, for each such $y$, there exists $x\in G$ such that $|x|=4$, $|xy|\geq3$ and $G=\langle x,y\rangle$.
\end{proposition}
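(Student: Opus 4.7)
The first two assertions are quick. Since $r$ is a primitive prime divisor of $q^4-1$, the multiplicative order of $q$ modulo $r$ equals $4$, so $4\mid r-1$, forcing $r\geq 5$. Moreover $r$ divides $q^2+1=(q+\sqrt{2q}+1)(q-\sqrt{2q}+1)$, and the two factors differ by $2\sqrt{2q}$ so are coprime; hence $r$ divides exactly one of them, call it $m$. Since $\Sz(q)$ contains a cyclic maximal torus $T$ of order $m$, it contains elements of order $r$.

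The bulk of the work concerns the existence of $x$. Fix $y$ of order $r$ and let $T$ be the unique torus of order $m$ containing $\langle y\rangle$. My first task is to identify the maximal overgroups of $\langle y\rangle$. Using the known list of maximal subgroups of $\Sz(q)$ (Borel, dihedral of order $2(q-1)$, the two torus normalisers, and subfield subgroups) together with the fact that $r$ is odd and coprime to $q(q-1)$, one checks that any maximal subgroup containing $y$ is either $M:=\Nor_G(T)=T\rtimes\langle t\rangle$, with $|t|=4$ and $t^2$ inverting $T$, or else a subfield subgroup $\Sz(q_0)^g$ with $q=q_0^k$ for some odd prime $k$ and $r\mid q_0^2+1$. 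A short direct computation using that $t^2$ inverts $T$ and $T$ has odd order shows that $M$ has exactly $2m$ elements of order $4$ (those in $Tt\cup Tt^3$) and $m$ involutions (those in $Tt^2$); the remaining $m$ elements lie in $T$ and have odd order.

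The second task is a counting argument. By the trivial-intersection property of the Sylow $2$-subgroups of $\Sz(q)$ (each of order $q^2$ with $q-1$ central involutions and $q^2-q$ elements of order $4$), the set $X_4$ of elements of order $4$ in $G$ has size $q(q-1)(q^2+1)$. An element $x\in X_4$ is excluded if one of the following holds: (a) $x\in M$; (b) $xy$ is an involution (the only way $|xy|<3$, since $|x|=4$ and $|y|=r\geq 5$ rule out $xy=1$); or (c) $x$ lies in some subfield conjugate $\Sz(q_0)^g$ containing $y$. Case (a) contributes at most $2m\leq 2(q+\sqrt{2q}+1)$ elements; case (b) contributes at most $(q-1)(q^2+1)$ via the injection $x\mapsto xy$ into the involutions of $G$. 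For case (c), a double counting identity, using $C_G(y)=T$, bounds the number of such conjugates through $\langle y\rangle$ by $m(r-1)/(4m_0)<m/4$, where $m_0\mid q_0^2+1$ is the relevant subfield torus order; each such conjugate contains at most $q_0(q_0-1)(q_0^2+1)\leq q_0^4$ elements of order $4$. Since $q_0\leq q^{1/3}$, the total subfield contribution is $O(q^{7/3})$ after summing over the at most $\log_2 q$ relevant subfields. Hence the bad count is dominated by $(q-1)(q^2+1)\sim q^3$, strictly less than $|X_4|\sim q^4$ for all $q\geq 8$, so a valid $x$ exists.

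The main obstacle is case (c): one must use the primitive-prime-divisor hypothesis to determine exactly which subfields $\Sz(q_0)$ can contain $y$, and then apply the double counting identity to bound the number of $G$-conjugates passing through $\langle y\rangle$. The other two cases are essentially immediate; once case (c) is in hand, the final inequality holds with wide margin for all $q\geq 8$, and no case-by-case verification for small $q$ is required.
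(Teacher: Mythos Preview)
Your overall strategy---count elements of order $4$ and subtract those that fail one of the conditions---matches the paper's, and your handling of cases (a) and (b) is fine. The divergence is in case (c), which you call ``the main obstacle'' but which the paper eliminates in one line: since $r$ is a primitive prime divisor of $q^4-1=2^{4(2n+1)}-1$, it divides no $2^k-1$ with $k<4(2n+1)$; in particular $r\nmid q_0^4-1$, and hence $r\nmid|\Sz(q_0)|=q_0^2(q_0^2+1)(q_0-1)$ for every proper subfield $q_0=q^{1/d}$. So no subfield subgroup (or conjugate thereof) can contain $y$ at all, the condition ``$r\mid q_0^2+1$'' that you allow for is never met, and your case (c) is vacuous. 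The paper then observes that $M$ is the \emph{unique} maximal subgroup containing $y$ (since $\langle y\rangle$ is characteristic in the cyclic torus, hence normal in $M$), bounds $\{g:\langle g,y\rangle\neq G\}$ by $|M|\leq 4(q+\sqrt{2q}+1)$, adds $n_2+1$ for the $|gy|\leq 2$ condition, and checks this is below $n_4=(q^2+1)(q^2-q)$.

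Your route would still reach the conclusion, but the double-counting bound $m(r-1)/(4m_0)$, the $O(q^{7/3})$ estimate, and the sum over subfields are machinery for a case that does not occur. The primitive-prime-divisor hypothesis is doing more than you credit it with: it does not merely restrict which subfield subgroups can contain $y$, it rules them out entirely.
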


\begin{proof}
First, recall that $|G|=q^2(q^2+1)(q-1)$  (see~\cite[Page~8, Table~I]{GLS1994}). Since $r$ is a primitive prime divisor of $q^4-1$, it divides $q^4-1$ but not $q^2-1$ and thus must divide $q^2+1$. It follows that $G$ has an element $y$ of order $r$ and that $r\geq 5$. We will now prove that there exists an element $x$ of order $4$ with the required properties, essentially by a somewhat crude counting argument.

We denote by $\E_q$ the elementary abelian group of order $q$ and, for an integer $n\geq2$, by $\ZZ_n$ the cyclic group of order $n$ and  $\D_{2n}$ the dihedral group of order $2n$.   

Up to conjugation, the maximal subgroups of $G$ are the following (see for instance~\cite[Table~8.16]{BHR2013}):
\begin{itemize}
\item $(\E_q.\E_q)\rtimes\ZZ_{q-1}$,
\item $\D_{2(q-1)}$,
\item $\ZZ_{q+\sqrt{2q}+1}\rtimes\ZZ_4$,
\item $\ZZ_{q-\sqrt{2q}+1}\rtimes\ZZ_4$,
\item $\Sz(q_0)$, where $q_0=q^{1/d}>2$ for some prime divisor $d$ of $2n+1$.
\end{itemize}
Recall that $r$ is odd,  does not divide $q-1$ nor $q_0^4-1$ and thus does not divide its factor $(q_0^2+1)(q_0-1)$. This implies that $r$ does not divide $|\Sz(q_0)|=q_0^2(q_0^2+1)(q_0-1)$.  It follows that a maximal subgroup $M$ of $G$ containing $y$ must be of the form $\ZZ_{q\pm \sqrt{2q}+1}\rtimes\ZZ_4$. Since every subgroup of a cyclic group is characteristic, $\langle y\rangle$ is normal in $M$ and thus $M$ is the only maximal subgroup of $G$ containing $y$  (for otherwise $\langle y\rangle$ would be normal in another maximal subgroup $N$ of $G$ and thus normal in $\langle M,N\rangle=G$).

Let $Q$ be a Sylow $2$-subgroup of $G$. Then $Q=\E_q.\E_q$ and $|\Nor_G(Q)|=(\E_q.\E_q)\rtimes\ZZ_{q-1}$. Hence the number $n$ of Sylow $2$-subgroups of $G$ is
\[
n=\frac{|G|}{|\Nor_G(Q)|}=\frac{q^2(q^2+1)(q-1)}{q^2(q-1)}=q^2+1.
\]
Let $n_2$ and $n_4$ denote the numbers of elements of order $2$ and $4$, respectively, in $G$. According to~\cite[Lemma~3.2]{FP1999}, there are $q-1$ involutions and $q^2-q$ elements of order $4$ in $Q$, and different conjugates of $Q$ have trivial intersection.  Then
\[
n_2=n(q-1)=(q^2+1)(q-1)
\]
and
\[
n_4=n(q^2-q)=(q^2+1)(q^2-q).
\]

Let
\[
I=\{g\in G:|gy|\leq2\}
\]
and
\[
J=\{g\in G: \langle g,y\rangle\neq G\}.
\]
Then $|I|=n_2+1$ and, since $M$ is the unique maximal subgroup of $G$ containing $y$,  $|J|\leq|M|$. Since
\begin{align*}
|I|+|J|&\leq n_2+|M|+1\\
&=(q^2+1)(q-1)+4(q\pm \sqrt{2q}+1)+1\\
&\leq(q^2+1)(q-1)+4(q+\sqrt{2q}+1)+1\\
&<(q^2+1)(q^2-q)=n_4,
\end{align*}
it follows that there exists $x\in G$ with $|x|=4$ and $x\notin I\cup J$, as required.
\end{proof}

\subsection{Constructing ORRs of out-valency $2$}

\begin{lemma}\label{Lem3-Cycle}
Let $G=\langle x,y\rangle$. If $|x|=3$ and $|y|\geq4$, then $\Cay(G,\{x,y\})$ is an ORR, unless $|y|=6$ and $x=y^4$, and $G\cong\ZZ_6$.
\end{lemma}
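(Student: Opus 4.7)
The plan is to verify two properties of $\Cay(G,\{x,y\})$: first, that it is a proper digraph, and second, that the stabiliser of the vertex $1$ in its automorphism group is trivial, which together say that it is an ORR. The first property is immediate from the order constraints: $x \ne x^{-1}$ since $|x|=3$, $y \ne y^{-1}$ since $|y| \geq 4$, and $x = y^{-1}$ would force $|y| = 3$; hence $\{x,y\} \cap \{x,y\}^{-1} = \emptyset$.

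The core idea for the stabiliser computation is the asymmetry of the two out-arcs at $1$ with respect to short directed cycles. Since $|x| = 3$, the arc $(1, x)$ lies in the directed $3$-cycle $1 \to x \to x^2 \to 1$. Any automorphism $\alpha$ fixing $1$ permutes the out-neighbours $\{x, y\}$ and preserves the property of an arc lying in a directed $3$-cycle. Thus, provided the arc $(1, y)$ lies in no directed $3$-cycle, we may conclude that $\alpha(x) = x$ and $\alpha(y) = y$.

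The central step is therefore to enumerate when the arc $(1, y)$ can lie in a directed $3$-cycle $1 \to y \to w \to 1$. Here $w$ must be $xy$ or $y^2$, and the closing arc is the left multiplication by some element of $\{x, y\}$, producing four equations in $x$ and $y$. Three of them are ruled out at once using $|x| = 3$ and $|y| \geq 4$, and the sole survivor is equivalent to $x = y^{-2}$. Combined with $|x|=3$, this forces $y^6 = 1$ and hence, since $|y| \geq 4$, gives $|y|=6$ and $x = y^4$, in which case $G = \langle y \rangle \cong \ZZ_6$. This matches exactly the exception recorded in the statement.

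Outside this exceptional configuration, $\alpha$ fixes both $x$ and $y$. The same $3$-cycle argument, translated to any vertex $v$ fixed by $\alpha$, shows that the out-neighbours $xv$ and $yv$ of $v$ are also fixed: the arc $(v, xv)$ lies in the $3$-cycle $v \to xv \to x^2v \to v$, while the arc $(v, yv)$ lies in no $3$-cycle by the same case analysis. Since $G$ is finite and equal to $\langle x, y \rangle$, every element of $G$ can be written as a product of non-negative powers of $x$ and $y$, so induction on word length shows that $\alpha$ fixes every vertex of $\Cay(G,\{x,y\})$, and hence is trivial. I expect the only delicate step to be ensuring the case analysis for when $(1, y)$ lies in a directed $3$-cycle is exhaustive and correctly identifies $x = y^{-2}$ as the unique exception; the rest is mechanical.
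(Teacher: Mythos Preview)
Your proof is correct and follows essentially the same approach as the paper's: both arguments show that, outside the exception $x=y^{-2}$, the arc $(1,x)$ lies in a directed $3$-cycle while $(1,y)$ does not, and then propagate via connectedness (the paper phrases this as ``$(1,x,x^2,x^3)$ is the only directed $3$-cycle through $1$'' and invokes vertex-transitivity). One minor slip in your case count: of the four closing equations, two ($x^2y=1$ and $y^3=1$) are ruled out by the order hypotheses, while the other two ($yxy=1$ and $xy^2=1$) both reduce to $x=y^{-2}$, so it is ``two ruled out, two equivalent survivors'' rather than ``three ruled out, one survivor''---but this does not affect the argument.
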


\begin{proof}
Let $\Gamma=\Cay(G,\{x,y\})$ and let $A=\Aut(\Gamma)$. Note that $\Gamma$ is a strongly connected proper digraph. The following diagram shows all the directed paths of length at most $3$ in $\Gamma$  starting at $1$.
\begin{center}
\begin{tikzpicture}[scale=0.45]
\draw [->] (1.2,0.5)--(3,2.7); \draw [->] (1.2,-0.5)--(3,-2.7);
\draw [->] (3.9,3.4)--(6,4.75); \draw [->] (3.9,2.7)--(6,1.6); \draw [->] (3.9,-2.6)--(6,-1.5); \draw [->] (3.9,-3.4)--(6,-4.75);
\draw [->] (7.4,5.1)--(9.8,5.6); \draw [->] (7.4,4.5)--(9.8,4); \draw [->] (7.4,1.75)--(9.8,2.3); \draw [->] (7.4,1.25)--(9.8,0.75);
\draw [->] (7.4,-5)--(9.8,-5.5); \draw [->] (7.4,-4.5)--(9.8,-4.1); \draw [->] (7.4,-1.75)--(9.8,-2.3); \draw [->] (7.4,-1.25)--(9.8,-0.75);
\node at (0.8,0) {$1$};
\node at (3.5,3) {$x$}; \node at (3.5,-3) {$y$};
\node at (6.7,5) {$x^2$}; \node at (6.7,1.5) {$yx$}; \node at (6.7,-4.75) {$y^2$}; \node at (6.7,-1.5) {$xy$};
\node at (10.7,5.7) {$x^3$}; \node at (10.75,4.1) {$yx^2$}; \node at (10.75,2.3) {$xyx$}; \node at (10.75,0.8) {$y^2x$};
\node at (10.7,-5.5) {$y^3$}; \node at (10.75,-4) {$xy^2$}; \node at (10.75,-2.3) {$yxy$}; \node at (10.75,-0.75) {$x^2y$};
\end{tikzpicture}
\end{center}
Since $|y|\geq 4$, we have $y^3\neq 1$ and $y\neq x^{-2}$. Moreover, if $y^2=x^{-1}$, then $|y|=6$ and thus $x=y^4$ and the result holds. We thus assume this is not the case. Since $x^3=1$, this implies that $(1,x,x^2,x^3)$ is the only directed cycle of length $3$ starting at $1$. This implies that the stabiliser $A_1$ of the vertex $1$ also fixes $x$. As $1$ only has one out-neighbour other than $x$, it must also be fixed. By vertex-transitivity, we find that fixing a vertex fixes its out neighbours and, using connectedness, we conclude that $A_1=1$ and thus $\Gamma$ is an ORR.
\end{proof}

\begin{lemma}\label{Lem4-Cycle}
Let $G=\langle x,y\rangle$. If $|x|=4$, $|y|\geq5$ and $|xy|\geq3$, then $\Cay(G,\{x,y\})$ is an ORR, unless $|y|=12$ and $x=y^9$, and $G\cong\ZZ_{12}$.
\end{lemma}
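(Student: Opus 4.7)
The plan is to mimic the proof of Lemma~\ref{Lem3-Cycle}, with the directed $4$-cycle $(1,x,x^2,x^3,1)$ playing the role that $(1,x,x^2,1)$ did there. Set $\Gamma=\Cay(G,\{x,y\})$ and $A=\Aut(\Gamma)$. Since $|x|=4$ and $|y|\geq 5$, neither $x$ nor $y$ is an involution, and $|xy|\geq 3$ rules out $xy=1$, so $\{x,y\}\cap\{x^{-1},y^{-1}\}=\emptyset$ and $\Gamma$ is a proper digraph.

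The key step is to enumerate all $16$ length-$4$ words $g_1g_2g_3g_4$ in the alphabet $\{x,y\}$ equal to $1$, since these correspond exactly to directed closed walks of length $4$ from the vertex $1$. The word $x^4$ always works, giving the walk $(1,x,x^2,x^3,1)$. Each of the four words with exactly three $x$'s forces $y=x^{-3}=x$, contradicting $|y|\geq 5$. Among the six words with exactly two $x$'s, the two alternating words $xyxy$ and $yxyx$ yield $(xy)^2=1$, contradicting $|xy|\geq 3$, while the other four force $y^2=x^{-2}$ and hence $y^4=1$, again contradicting $|y|\geq 5$. Each of the four words with exactly one $x$ reduces to $x=y^{-3}$; then $|y^3|=4$, so $|y|\in\{4,12\}$, leaving only the listed exception $|y|=12$, $x=y^9$, $G=\langle y\rangle\cong\ZZ_{12}$. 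Finally, $y^4=1$ is ruled out by $|y|\geq 5$.

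Outside the exception, $(1,x,x^2,x^3,1)$ is thus the unique directed closed walk of length $4$ from $1$, and since $|x|=4$ its vertices $1,x,x^2,x^3$ are distinct, so it is a genuine $4$-cycle. Any $\alpha\in A_1$ must map this cycle to itself, and fixing $1$ forces $\alpha(x)=x$; as $y$ is the only other out-neighbour of $1$, $\alpha$ also fixes $y$. By vertex-transitivity, the analogous uniqueness holds at every vertex, so by the same argument used at the end of the proof of Lemma~\ref{Lem3-Cycle} (fixing a vertex fixes both of its out-neighbours, then using $\langle x,y\rangle=G$ to propagate), $A_1=1$; hence $\Gamma$ is a DRR, and being proper, an ORR.

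The only delicate part is the enumeration: isolating the single exception from the four length-$4$ words with one $x$ that all collapse to $x=y^{-3}$. The hypothesis $|xy|\geq 3$, stronger than the corresponding hypothesis in Lemma~\ref{Lem3-Cycle}, is precisely what kills the alternating two-$x$ words $xyxy$ and $yxyx$, and the hypothesis $|y|\geq 5$ rather than $|y|\geq 4$ is what kills the $|y|=4$ sub-case of $x=y^{-3}$.
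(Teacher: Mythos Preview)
Your proof is correct and follows essentially the same approach as the paper's own argument: both show that, outside the exception, the word $x^4$ is the unique length-$4$ word in $\{x,y\}$ equal to $1$, by ruling out the other fifteen words via the hypotheses $|y|\geq 5$, $|xy|\geq 3$, and the case split leading to $x=y^{-3}$. The paper presents this via a tree diagram of all directed paths of length at most $4$ from $1$, whereas you organise the same check by the number of $x$'s in the word; your phrasing in terms of closed walks (rather than cycles) is if anything slightly more precise, but the content is the same.
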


\begin{proof}
Let $\Gamma=\Cay(G,\{x,y\})$ and let $A=\Aut(\Gamma)$. Note that $\Gamma$ is a strongly connected proper digraph. The following diagram shows all the directed paths of length at most $4$ in $\Gamma$  starting at $1$.

\begin{center}
\begin{tikzpicture}[scale=0.45]
\draw [->] (1.2,0.5)--(3,3.5);\draw [->] (1.2,-0.5)--(3,-3.5);

\draw [->] (3.8,4.25)--(5.8,5.8);\draw [->] (3.8,3.75)--(5.8,2.1);\draw [->] (3.8,-3.75)--(5.8,-2);\draw [->] (3.8,-4.25)--(5.8,-6);

\draw [->] (7,6.25)--(9,6.9); \draw [->] (7,5.75)--(9,5);\draw [->] (7,2.25)--(9,3);\draw [->] (7,1.75)--(9,1);
\draw [->] (7,-6.25)--(9,-7); \draw [->] (7,-5.75)--(9,-5);\draw [->] (7,-2.25)--(9,-3);\draw [->] (7,-1.75)--(9,-1);

\draw [->] (10.7,7.2)--(12.5,7.5);\draw [->] (10.7,6.75)--(12.5,6.5);\draw [->] (10.7,5.25)--(12.5,5.5);\draw [->] (10.7,4.75)--(12.5,4.5);
\draw [->] (10.7,3.25)--(12.5,3.5);\draw [->] (10.7,2.75)--(12.5,2.5);\draw [->] (10.7,1.25)--(12.5,1.5);\draw [->] (10.7,0.75)--(12.5,0.5);
\draw [->] (10.7,-7.25)--(12.5,-7.5);\draw [->] (10.7,-6.75)--(12.5,-6.5);\draw [->] (10.7,-5.25)--(12.5,-5.5);\draw [->] (10.7,-4.75)--(12.5,-4.5);
\draw [->] (10.7,-3.25)--(12.5,-3.5);\draw [->] (10.7,-2.75)--(12.5,-2.5);\draw [->] (10.7,-1.25)--(12.5,-1.5);\draw [->] (10.7,-0.75)--(12.5,-0.5);

\node at (0.8,0) {$1$};

\node at (3.3,4) {$x$};\node at (3.3,-4) {$y$};

\node at (6.45,6) {$x^2$};\node at (6.45,2) {$yx$};\node at (6.45,-6) {$y^2$};\node at (6.45,-2) {$xy$};

\node at (9.9,7) {$x^3$};\node at (9.9,5) {$yx^2$};\node at (9.9,3) {$xyx$};\node at (9.9,1) {$y^2x$};
\node at (9.9,-7) {$y^3$};\node at (9.9,-5) {$xy^2$};\node at (9.9,-3) {$yxy$};\node at (9.9,-1) {$x^2y$};

\node at (13.4,7.5) {$x^4$};\node at (13.5,6.5) {$yx^3$};\node at (13.6,5.5) {$xyx^2$};\node at (13.6,4.5) {$y^2x^2$};\node at (13.6,3.5) {$x^2yx$};\node at (13.6,2.5) {$yxyx$};\node at (13.6,1.5) {$xy^2x$};\node at (13.5,0.5) {$y^3x$};
\node at (13.4,-7.5) {$y^4$};\node at (13.5,-6.5) {$xy^3$};\node at (13.6,-5.5) {$yxy^2$};\node at (13.6,-4.5) {$x^2y^2$};\node at (13.6,-3.5) {$y^2xy$};\node at (13.6,-2.5) {$xyxy$};\node at (13.6,-1.5) {$yx^2y$};\node at (13.5,-0.5) {$x^3y$};
\end{tikzpicture}
\end{center}
Since $|y|\geq 5$, we have $y^4\neq 1$,  $y\neq x^{-3}$ and $y^2\neq x^{-2}$. Similarly, $|xy|\geq3$ implies that $(xy)^2\neq 1\neq (yx)^2$. Moreover, if $y^3=x^{-1}$, then $|y|=12$ and thus $x=y^9$ and the result holds. We thus assume this is not the case. Since $x^4=1$, this implies that $(1,x,x^2,x^3,x^4)$ is the only directed cycle of length $4$ starting at $1$ and, as in the previous lemma, $\Gamma$ is an ORR.
\end{proof}

\section{Proof of Theorem~\ref{ThmMain}}
Let $G$ be a finite simple group with $|G|\geq5$. We first suppose that $G=\bbF_p^+$ for some prime $p\geq5$. Let $x,y\in\bbF_p\setminus\{0\}$ such that $x\neq\pm y$ and let $\Gamma=\Cay(G,\{x,y\})$. Note that $\Gamma$ is a proper digraph of out-valency $2$. By~\cite[Proposition~1.3~and~Example~2.2]{Xu1998}, $\Gamma$ is an ORR if and only if  the only solution to 
\begin{equation}\label{EqAut(G,S)}
\{\lambda x,\lambda y\}=\{x,y\}
\end{equation}
with $\lambda\in\bbF_p^\times$ is $\lambda=1$. Suppose otherwise, that is~\eqref{EqAut(G,S)} holds with $\lambda\neq 1$. This implies that $\lambda x=y$ and $\lambda y=x$, which yields that
\[
\lambda x^2=(\lambda x)x=y(\lambda y)=\lambda y^2,
\]
and hence $x^2=y^2$, contradicting $x\neq\pm y$. Thus we conclude that $\Gamma$ is an ORR, as required.

We may now assume that $G$ is nonabelian. If $G$ has an element $x$ of order $3$ then, by Corollary~\ref{CorGK} there exists $y\in G$ such that $|y|\geq4$ and $G=\langle x,y\rangle$. By Lemma~\ref{Lem3-Cycle},  $\Cay(G,\{x,y\})$ is an ORR. We may thus assume that $G$ does not have an element of order $3$ and thus  $G=\Sz(q)$ for some $q=2^{2n+1}\geq8$. Let $r$ be a primitive prime divisor of $q^4-1$. By Proposition~\ref{PropSuzuki}, $G$ contains elements $x$ and $y$ such that $|x|=4$, $|y|=r\geq5$, $|xy|\geq3$ and $G=\langle x,y\rangle$.  By Lemma~\ref{Lem4-Cycle}, $\Cay(G,\{x,y\})$ is an ORR.
\qed

\section*{Acknowledgements}
The authors are grateful to the N.Z. Marsden Fund which helped support (via grant UOA1824) the second author's visit to the University of Auckland in 2019.


\begin{thebibliography}{99}

\bibitem{Babai} L. Babai,  Finite digraphs with given regular automorphism groups, \emph{Period. Math. Hungar.} 11 (1980), 257--270.

\bibitem{BHR2013} J. N. Bray, D. F. Holt and C. M. Roney-Dougal, \emph{The maximal subgroups of the low-dimensional finite classical groups}, Cambridge University Press, Cambridge, 2013.

\bibitem{ConderTriangle} M. D. E. Conder, Some results on quotients of triangle groups, \emph{Bull. Austral. Math. Soc.} 30 (1984),  73--90.

\bibitem{FangLuWangXuLowValency} X.-G. Fang, Z.-P. Lu, J.  Wang and  M.-Y. Xu,  Cayley digraphs of finite simple groups with small out-valency,  \emph{Comm. Algebra} 32 (2004), 1201--1211.

\bibitem{FP1999} X. G. Fang and C. E. Praeger, Finite two-arc transitive graphs admitting a Suzuki simple group, \emph{Comm. Algebra}, 27 (1999),  3727--3754.

\bibitem{GLS1994} D. Gorenstein, R. Lyons and R. Solomon, \emph{The classification of the finite simple groups, Number 1}, American Mathematical Society, Providence, RI, 1994.

\bibitem{GK2000} R. M. Guralnick and W. M. Kantor, Probabilistic generation of finite simple groups, \emph{J. Algebra} 234 (2000),  743--792.

\bibitem{King2017} C. S. H. King, Generation of finite simple groups by an involution and an element of prime order, \emph{J. Algebra} 478 (2017), 153--173.

\bibitem{MorrisSpigaORR1} J. Morris and  P. Spiga,  Every finite non-solvable group admits an oriented regular representation, \emph{J. Combin. Theory Ser. B} 126 (2017), 198--234.

\bibitem{MorrisSpigaORR2} J. Morris and  P. Spiga,  Classification of finite groups that admit an oriented regular representation,  \emph{Bull. Lond. Math. Soc.} 50 (2018), 811--831.

\bibitem{SpigaORR} P. Spiga, Finite groups admitting an oriented regular representation, \emph{J. Combin. Theory Ser. A} 153 (2018), 76--97.

\bibitem{SpigaCubic} P. Spiga, Cubic graphical regular representations of finite non-abelian simple groups,  \emph{Comm. Algebra} 46 (2018), 2440--2450.

\bibitem{XiaCubic1} B. Xia,  Cubic graphical regular representations of $\PSL_3(q)$, \emph{Discrete Math.} 343 (2020),  111646, 9 pp.

\bibitem{XiaCubic2} B. Xia,  On cubic graphical regular representations of finite simple groups,  \emph{J. Combin. Theory Ser. B} 141 (2020), 1--30.

\bibitem{XiaFangCubic} B. Xia and T. Fang, Cubic graphical regular representations of $\PSL_2(q)$,  \emph{Discrete Math.} 339 (2016), 2051--2055.

\bibitem{Xu1998} M.-Y. Xu, Automorphism groups and isomorphisms of Cayley digraphs, \emph{Discrete Math.}  182 (1998), 309--319.

\bibitem{Zsigmondy1892} K. Zsigmondy, Zur Theorie der Potenzreste \emph{Monatsh. Math. Phys.}, 3 (1892), 265--284.
\end{thebibliography}
\end{document}